
\documentclass{amsart}
\usepackage{amssymb,enumerate}

\usepackage[all,2cell,ps]{xy}
\usepackage{hyperref}

\newtheorem{theorem}[subsection]{Theorem}

\newtheorem{lemma}[subsection]{Lemma}

\theoremstyle{definition}

\newtheorem{example}[subsection]{Example}
\newtheorem{conjecture}[subsection]{Conjecture}
\newtheorem{question}[subsection]{Question}

\theoremstyle{remark}

\setcounter{secnumdepth}{2}

\numberwithin{equation}{subsection}


\newcommand{\fm}{\mathfrak{m}}
\newcommand{\fn}{\mathfrak{n}}

\newcommand{\ff}{\mathfrak{f}}

\newcommand{\tf}{\boldsymbol{\bot}}
\newcommand{\tp}{\boldsymbol{\top}}

\newcommand{\ov}{\overline}

\newcommand{\QQ}{\mathbb{Q}}

\newcommand{\End}{\operatorname{End}}
\newcommand{\Ext}{\operatorname{Ext}}

\newcommand{\Hom}{\operatorname{Hom}}

\begin{document}

\title[Torsion in tensor products]
{Torsion in tensor products over one-dimensional domains}
\author[Steinburg, Wiegand]
{Neil Steinburg and Roger Wiegand}

\address{Neil Steinburg\\
Department of Mathematics and Computer Science \\
Drake University\\
2507 University Avenue\\
Des Moines IA 50311, U.S.A.}
\email{neil.steinburg@drake.edu }

\address{Roger Wiegand \\
Department of Mathematics \\
University of Nebraska\\
Lincoln, NE 68588-0130, U.S.A.}
\email{rwiegand1@unl.edu}

\thanks{Wiegand's research was partially supported by Simons Collaboration Grant 426885}

\subjclass[2010]{13B22,13D07, 13C12, 13C99}

\keywords{integral closure, tensor product, torsion}

\date{\today}

\begin{abstract}
Over a one-dimensional Gorenstein local domain $R$, let $E$ be the endomorphism ring of the maximal of $R$, viewed as a subring of the integral closure $\ov R$.  If there exist finitely generated $R$-modules $M$ and $N$, neither of them free, whose tensor product is torsion-free, we show that $E$ must be local with the same residue field as $R$.  
\end{abstract}

\maketitle{}

\section{Introduction}

Finding interesting examples of non-zero, finitely generated modules $M, N$ over a commutative Noetherian ring $R$, with $M\otimes_RN$ torsion-free (meaning that no non-zero element of $M\otimes_RN$ is killed by a regular element of $R$) is a non-trivial task.  Of course there are boring examples: take one of the modules to be torsion-free and the other to be projective.  Or, if $R$ is not local, take $M = R/\fm$ and $N = R/\fn$, where $\fm$ and $\fn$ are distinct maximal ideals.  A slightly less boring example is obtained by taking $R=\QQ[[x,y]]/(xy)$ and $M = N = R/(x)$.

\begin{question}\label{question:exist?}  Let $R$ be a local domain, and let $M$ and $N$ be finitely generated modules, neither one of them free.  Must $M\otimes_RN$ always have non-zero torsion?
\end{question}

Again, the answer is ``no'', and here is the connection with numerical semigroups:

\begin{example} Let $R = k[[t^4,t^5,t^6]]$, $M = (t^4,t^5)$, and $N = (t^4,t^6)$.  Then $M\otimes_RN$ is torsion-free \cite[4.3]{HW}. 
\end{example}

In fact, the only known examples where Question~\ref{question:exist?} has a negative answer are numerical semigroup rings.  This leads to a (somewhat halfhearted, since it is probably false) conjecture:

\begin{conjecture}\label{conjecture:halfhearted}
Suppose $R$ is a one-dimensional local domain whose integral closure $\ov R$ is finitely generated as an $R$-module.  If there exist finitely generated modules $M$ and $N$, neither of them free, with $M\otimes_RN$ torsion-free, then $\ov R$ is local, and the inclusion $R\subseteq \ov R$ induces an isomorphism on residue fields.
\end{conjecture}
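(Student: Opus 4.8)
Since the final statement is a conjecture that the authors expect to be false, what follows is a plan of attack rather than a claimed proof, with attention to the point at which I expect it to break down.

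The plan is first to reduce to the case in which $M$ and $N$ are torsion-free with no nonzero free direct summand. Because $R$ is local, $M\otimes_R N\neq 0$, and if either factor were torsion then $M\otimes_R N$ would be a nonzero torsion module, contradicting the hypothesis; hence both factors have positive rank. A short (if slightly delicate) argument then shows that torsion in $M$ or $N$ would survive into $M\otimes_R N$, so I may assume $M$ and $N$ torsion-free, and splitting off free summands preserves both non-freeness and torsion-freeness of the product. Since $\dim R=1$, a torsion-free module is maximal Cohen--Macaulay, so the hypothesis says exactly that $M\otimes_R N$ is maximal Cohen--Macaulay of depth one. Next I would reformulate the conclusion. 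As $\ov R$ is a module-finite normal extension of a one-dimensional domain, it is a semilocal Dedekind domain, hence a principal ideal domain with finitely many maximal ideals $\fm_1,\dots,\fm_s$ and residue fields $\ell_i=\ov R/\fm_i$. The two conclusions — $\ov R$ local, and $R\subseteq\ov R$ inducing an isomorphism on residue fields — amount to the single assertion that $\ov R$ is a discrete valuation ring with residue field $k:=R/\fm$; equivalently, the artinian ring $\ov R/\fm\ov R$ is local with residue field $k$. I would therefore aim at the contrapositive shape: if $\ov R$ is \emph{branched} ($s\ge 2$) or the residue extension $k\subseteq\ell_1$ is nontrivial, then every pair of nonfree finitely generated modules has nonzero torsion in its tensor product.

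The engine would be twofold. First, when $M$ and $N$ are torsion-free, the torsion submodule of $M\otimes_R N$ is precisely the kernel of the natural map $M\otimes_R N\to\Hom_R(\Hom_R(M,R),N)$, whose target is torsion-free and which is an isomorphism at the generic point; by Auslander's transpose sequence this kernel is $\Ext^1_R(\operatorname{Tr} M,N)$, so the hypothesis says exactly that $\Ext^1_R(\operatorname{Tr} M,N)=0$. The problem becomes: show that this $\Ext$ must be nonzero whenever $\ov R$ is branched or residually nontrivial and neither module is free. Second, branching and the residue extension are both read off the conductor square relating $R$, $\ov R$, $R/\mathfrak{c}$ and $\ov R/\mathfrak{c}$, where $\mathfrak{c}=(R:\ov R)\neq 0$; using Milnor patching along this square I would manufacture, from a nontrivial idempotent of $\ov R/\mathfrak{c}$ (which exists once $s\ge 2$) or from a residue element outside $k$, a rank-one building block that is forced into any nonfree $M$ and $N$ and makes $\Ext^1_R(\operatorname{Tr} M,N)$ nonvanish. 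The first blowup $E=\End_R(\fm)$, the bottom of the tower $R\subsetneq E\subseteq\cdots\subseteq\ov R$, is the natural starting point, since its locality and residue field are already under control.

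The hard part, and I suspect the reason this remains only a conjecture, is the step that in the Gorenstein case is supplied by duality and has no evident substitute over an arbitrary one-dimensional domain: there is no rigidity theorem guaranteeing that $\Ext^1_R(\operatorname{Tr} M,N)=0$ forces one of $M,N$ to be free, so local control at $E$ cannot be propagated up the tower $R\subsetneq E\subseteq\cdots\subseteq\ov R$ to $\ov R$ itself. It is therefore plausible that a carefully engineered non-Gorenstein branched domain carries nonfree $M,N$ with $M\otimes_R N$ torsion-free. The concrete plan is thus to stress-test the construction of the previous paragraph on the smallest non-Gorenstein domains with branched normalization — the Gorenstein branched cases, such as analytically reducible plane-curve singularities, already falling under the hypersurface theory — and to watch whether the $\Ext^1$ there can vanish: a single vanishing instance refutes the conjecture, while its forced nonvanishing would be the heart of a proof.
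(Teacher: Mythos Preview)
You are right that the statement is a conjecture and that the paper does not prove it; the authors themselves call it ``halfhearted'' and ``probably false.'' What the paper actually establishes is the weaker Theorem~\ref{theorem:evidence}: under the additional hypothesis that $R$ is Gorenstein, the conclusion holds with the endomorphism ring $E=\End_R(\fm)$ in place of $\ov R$. So there is no proof of the stated conjecture to compare against, only this partial evidence.

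Your initial reductions line up with the paper's: strip free summands, then reduce to torsion-free $M$ and $N$. One small correction: you do not argue that torsion in $M$ or $N$ survives into $M\otimes_RN$ (which is delicate, as you note); the paper instead invokes \cite[Lemma~2.2]{CW} to see directly that $(\tf M)\otimes_R(\tf N)$ is torsion-free and then checks that $\tf M$, $\tf N$ inherit the no-free-summand property. Beyond that, your transpose/$\Ext^1$ reformulation is a legitimate but genuinely different angle from the paper's mechanism for Theorem~\ref{theorem:evidence}: rather than analyzing $\Ext^1_R(\operatorname{Tr} M,N)$, the paper uses Gorenstein duality to endow $M\cong M^{**}$ and $N\cong N^{**}$ with compatible $E$-module structures, shows $M\otimes_RN\cong M\otimes_EN$ (Lemma~\ref{lemma:iso}), and then derives a contradiction by comparing $k$-dimensions of $\ov M\otimes_k\ov N$ and $\ov M\otimes_A\ov N$ over the two-dimensional algebra $A=E/\fm E$.

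Your diagnosis of the obstruction---that control at $E$ does not propagate up the tower to $\ov R$---is exactly the point the paper makes explicit. Example~\ref{example:hopedash} exhibits a Gorenstein one-dimensional local domain with $E$ local and residue field $k$ while $\ov R$ is not local; so the hoped-for implication fails. The paper then observes, in line with your remark about hypersurfaces, that this particular ring has multiplicity two, hence is a hypersurface, so by \cite{HW} it cannot furnish a counterexample to the conjecture. Thus your proposed stress-test---look for non-Gorenstein (or at least non-hypersurface) branched examples---is precisely where the paper leaves matters.
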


\section{Some evidence}  In this section we will prove the result stated in the abstract, which gives some support (admittedly rather sketchy) for Conjecture~\ref{conjecture:halfhearted}.

Throughout, $(R,\fm,k)$ is a one-dimensional Gorenstein local domain, with maximal ideal $\fm$ and residue field $k=R/\fm$.  We let $K$ denote the quotient field of $R$. If $I$ and $J$ are non-zero $R$-submodules of $K$, we identify $\Hom_R(I,J)$ with the set $\{\alpha\in K \mid \alpha I \subseteq J\}$, via the isomorphism $\varphi \mapsto \frac{1}{a}\varphi(a)$, where $a$ is a fixed but arbitrary nonzero element of $I$. In particular, we identify $\End_R\fm$ with the ring $E = \{\alpha\in K \mid \alpha \fm\subseteq \fm\}$. Then $R\subseteq E \subseteq \ov R$, where $\ov R$ is the integral closure of $R$ in $K$.  The next lemma is due to Bass \cite{Bass}.

\begin{lemma}\label{lemma:simple} Assume $\fm$ is not a principal ideal.  Then $E/R$ is a simple $R$-module, and $E$ is minimally generated, as an $R$-module, by $\{1,y\}$, where $y$ is an arbitrary element of $E\setminus R$.
\end{lemma}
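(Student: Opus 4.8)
The plan is to pin down $E$ explicitly as a fractional ideal, and then reduce everything to a socle computation in an Artinian Gorenstein ring. Since $\fm\subseteq R$ one always has $E=(\fm:\fm)\subseteq(R:\fm)=:\fm^{-1}$ inside $K$. Conversely, if $\alpha\in\fm^{-1}$ then $\alpha\fm$ is an ideal of $R$; it cannot be all of $R$, for then $\fm$ would be invertible, hence principal (invertible ideals of local rings are principal), contrary to hypothesis. So $\alpha\fm\subseteq\fm$, i.e.\ $\alpha\in E$. Thus $E=\fm^{-1}$, and it suffices to prove that $\fm^{-1}/R$ is a simple $R$-module.

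Next I would push the computation down to an Artinian quotient. Choose a nonzero, hence regular, element $x\in\fm$. Multiplication by $x$ is an $R$-module isomorphism $\fm^{-1}\xra{\ \cong\ }x\fm^{-1}$ carrying $R$ onto $xR$, so $\fm^{-1}/R\cong x\fm^{-1}/xR$. A direct check shows $x\fm^{-1}=(xR:_R\fm)$: if $\beta\fm\subseteq xR$ then $\beta x\in\beta\fm\subseteq xR$ forces $\beta\in R$, while $\beta/x\in\fm^{-1}$; the reverse inclusion is immediate. Hence
\[
\fm^{-1}/R\ \cong\ (xR:_R\fm)/xR\ =\ (0:_{R/xR}\fm),
\]
the socle of the Artinian local ring $R/xR$. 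Because $R$ is one-dimensional Gorenstein, $R/xR$ is a zero-dimensional Gorenstein local ring, so its socle is one-dimensional over $k$. Therefore $\fm^{-1}/R\cong k$, and in particular $R\subsetneq E$.

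The assertion about generators is then bookkeeping. Since $E/R\cong k$, the image of any $y\in E\setminus R$ spans $E/R$ over $R$, so $E=R+Ry$, and by Nakayama $\{1,y\}$ generates $E$ as an $R$-module. For minimality, $\fm\cdot(E/R)=0$ gives $\fm\subseteq\fm E\subseteq R$; since $\fm E$ is an ideal of $R$ and $E\cdot(\fm E)=\fm E$ (as $E$ is a ring with $1$), the case $\fm E=R$ is impossible, since it would give $E=E\cdot R=E\cdot(\fm E)=\fm E=R$. Hence $\fm E=\fm$, so $\dim_k E/\fm E=\dim_k E/\fm=\length(E/R)+\length(R/\fm)=2$, and $E$ is not generated by a single element; thus $\{1,y\}$ is a minimal generating set.

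The only non-formal ingredient is the Gorenstein input in the second step — that the socle of the Artinian Gorenstein ring $R/xR$ is one-dimensional — together with the two small devices feeding into it: using non-principality of $\fm$ to replace the a priori mysterious ring $E$ by the concrete fractional ideal $\fm^{-1}$, and using multiplication by a regular element to turn $\fm^{-1}/R$ into a socle. Once those identifications are set up correctly, the remaining arguments are routine.
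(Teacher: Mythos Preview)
Your proof is correct, but it takes a genuinely different route from the paper's. The paper argues by dualizing the short exact sequence $0\to\fm\to R\to k\to 0$: since $\fm$ is not principal there is no surjection $\fm\to R$, so $\fm^*=\Hom_R(\fm,R)=\Hom_R(\fm,\fm)=E$, and the long exact sequence yields $0\to R\to E\to\Ext^1_R(k,R)\to 0$; the Gorenstein hypothesis enters as $\Ext^1_R(k,R)\cong k$. You instead identify $E$ with $\fm^{-1}$ and, multiplying by a regular element $x\in\fm$, convert $\fm^{-1}/R$ into the socle of the Artinian ring $R/xR$, invoking Gorenstein through the one-dimensional socle characterization. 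These two uses of ``Gorenstein'' are of course equivalent, but the organization is different: the paper's argument is shorter and stays in homological language, while yours makes the fractional-ideal picture fully explicit and avoids $\Ext$ altogether. One small remark on your last paragraph: your derivation of $\fm E=\fm$ is more work than needed, since $E=(\fm:\fm)$ gives $\fm E\subseteq\fm$ directly from the definition, and $1\in E$ gives the reverse inclusion; this is how the paper handles the minimality claim.
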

\begin{proof} Since $\fm$ is indecomposable, there is no surjection $\fm \twoheadrightarrow R$.  (Such a surjection would split, giving a decomposition $\fm \cong R\oplus H$, with $H \ne 0$, as $\fm$ is not prinicipal; but clearly $\fm$ is indecomposable, since $R$ is a domain.) This gives the second equality in the display
\begin{equation}\label{eq:dual=End}
\fm^* = \Hom_R(\fm,R) = \Hom_R(\fm,\fm) = E\,.
\end{equation}
Dualizing the short exact sequence 
\[
0 \to \fm \to R \to k \to 0\,,
\]
and using the fact that $k^* = 0$, we get an exact sequence
\[
0\to R^* \to \fm^* \to \Ext_R^1(k,R)\to 0\,.
\]
But $\Ext^1_R(k,R) \cong k$, as $R$ is one-dimensional and Gorenstein.  The identification of $\End_R(\fm)$ with $E$ is compatible with the identification of $R^*$ with $R$ (via multiplications), and thus the last short exact sequence shows that  $E/R\cong k$.  The next assertion is clear from simplicity of $E/R$ and the fact that $1$ is part of a minimal generating set for $E$, as $1\notin \fm = \fm E$.
\end{proof}

\begin{lemma} \label{lemma:iso}
Let $S$ be a subring of $K$ containing $R$ and finitely generated as an $R$-module. Let $M$ and $N$ be 
finitely generated $S$-modules such that $M\otimes_RN$ is torsion over $R$.  Then the natural surjection $M\otimes_R N \twoheadrightarrow M\otimes_S N$ is an isomorphism. 
\end{lemma}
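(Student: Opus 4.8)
The map in question is the canonical epimorphism $M\otimes_R N\twoheadrightarrow M\otimes_S N$, $m\otimes n\mapsto m\otimes n$, and the plan is to identify its kernel as a quotient of a torsion module and then use the hypothesis to force it to vanish. I should point out at once that the hypothesis as printed cannot be the intended one: if $M\otimes_R N$ is merely torsion, the surjection need not be injective. For instance, with $R=k[[t^2,t^3]]$, $S=\ov R=k[[t]]$, $M=S/tS$ and $N=S$ (both finitely generated over $S$, with $M\otimes_R N$ of finite length, hence torsion over $R$), the source is $M\otimes_R N\cong k\otimes_R S\cong k[[t]]/t^2k[[t]]$, which surjects onto $M\otimes_S N\cong k$ with nonzero kernel. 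So I read the hypothesis as ``$M\otimes_R N$ is torsion-free over $R$'' and establish the isomorphism in that case; the torsion-free hypothesis is used only at the very last step.

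First I would rewrite the source. Since $M$ is an $S$-module, associativity of tensor gives $M\otimes_R N\cong M\otimes_S(S\otimes_R N)$, and under this identification the canonical map above becomes $\id_M\otimes_S\mu$, where $\mu\colon S\otimes_R N\to N$, $s\otimes n\mapsto sn$, is the ($S$-linear, surjective) multiplication map. Writing $L=\Ker\mu$ and applying the right-exact functor $M\otimes_S(-)$ to the short exact sequence $0\to L\to S\otimes_R N\xrightarrow{\mu}N\to0$, I obtain that $\Ker(\id_M\otimes_S\mu)$ is the image of $M\otimes_S L$, and in particular is a quotient of $M\otimes_S L$. I argue through right-exactness rather than through a splitting: the sequence does not split $S$-linearly in general, so there is no direct-sum decomposition of $M\otimes_S(S\otimes_R N)$ to lean on.

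Next I would show $L$ is torsion over $R$ by a rank count. Since $S$ is a finitely generated $R$-submodule of the fraction field $K$, it is a fractional ideal and so $\rank_R S=1$; hence $\rank_R(S\otimes_R N)=\rank_R S\cdot\rank_R N=\rank_R N$. Ranks are additive along $0\to L\to S\otimes_R N\to N\to0$, so $\rank_R L=\rank_R(S\otimes_R N)-\rank_R N=0$, i.e. $L$ is torsion over $R$. Consequently $M\otimes_S L$ is torsion over $R$ as well, being a quotient of a finite direct sum of copies of $L$, and therefore so is its quotient $\Ker(M\otimes_R N\twoheadrightarrow M\otimes_S N)$. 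The upshot is that this kernel is a torsion $R$-module for \emph{all} finitely generated $S$-modules $M$ and $N$.

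Finally, under the intended hypothesis that $M\otimes_R N$ is torsion-free over $R$, the kernel is a torsion submodule of a torsion-free module, hence zero, so the surjection is an isomorphism. I expect the main obstacle to be conceptual rather than computational: recognizing that the printed word ``torsion'' must read ``torsion-free'' (and producing the small counterexample for the bare torsion version), and correctly isolating the kernel as a quotient of $M\otimes_S L$ via right-exactness rather than via a nonexistent $S$-linear splitting of $\mu$. The sole computation, the rank identity $\rank_R L=0$, is immediate once $\rank_R S=1$ is in hand.
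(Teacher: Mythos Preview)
Your argument is correct, and you are right that the hypothesis should read ``torsion-free''; your counterexample with $R=k[[t^2,t^3]]$, $S=k[[t]]$, $M=S/tS$, $N=S$ disposes of the statement as literally printed.

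Your route differs genuinely from the paper's. The paper never names the kernel. Instead it draws the square
\[
\xymatrix{
M\otimes_R N \ar@{>->}[r]^-{\delta} \ar@{->>}[d]^{\alpha} & K\otimes_R(M\otimes_R N) \ar@{->>}[d]^{\beta}\\
M\otimes_S N \ar[r]^-{\varepsilon} & K\otimes_S(M\otimes_S N)
}
\]
uses torsion-freeness to make $\delta$ injective, and then argues that $\beta$ is an isomorphism because after rewriting both sides as tensor products of $K$-vector spaces it is a surjection between spaces of the same dimension (the key input being $\dim_K(K\otimes_R M)=\dim_K(K\otimes_S M)$, essentially your $\rank_R S=1$). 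A diagram chase then gives $\alpha$ injective. Your approach, via the associativity isomorphism $M\otimes_R N\cong M\otimes_S(S\otimes_R N)$ and the torsion kernel $L=\Ker(S\otimes_R N\to N)$, is more structural: it isolates the kernel of $\alpha$ as a quotient of $M\otimes_S L$ and shows this is \emph{always} torsion, independent of any hypothesis on $M\otimes_R N$, with torsion-freeness invoked only at the final line. That extra byproduct---the kernel is torsion unconditionally---is something the diagrammatic proof does not make explicit. Conversely, the paper's version avoids the bookkeeping of the associativity identification and may read more transparently to someone who thinks in terms of localizing at $K$. Both proofs rest on the same arithmetic fact, that $S$ has rank one over $R$; they simply deploy it at different points.
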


\begin{proof}
We consult the following commutative diagram:

\begin{equation}
\label{eq:CD}
\xymatrix{ 
M\otimes_RN\  \ar@{>->}[r]^{\delta\ \ \ \ \ } \ar@{->>}[d]^{\alpha} & K\otimes_R(M\otimes_RN) 
\ar@{->}[r]^{\cong \ \ \ \ \ \ } \ar@{->>}[d]^{\beta} & (K\otimes_RM)\otimes_K(K\otimes_RN) \ar@{->>}[d]^{\gamma} \\
M\otimes_SN \ar@{->}[r]^{\varepsilon\ \ \ \ \ } & K\otimes_S(M\otimes_SN) 
		\ar@{->}[r]^{\cong \ \ \ \ \ \ } & (K\otimes_SM)\otimes_K(K\otimes_SN) } 
\end{equation}

The map $\delta$ is injective because $M\otimes_RN$ is torsion-free.  One checks (by clearing denominators) that a subset of an $S$-module is linearly independent over $S$ if and only if it is linearly independent over $R$, and so its rank as an $S$-module equals its rank as an $R$-module.  
Thus $r:= \dim_K(K\otimes_RM) = \dim_K(K\otimes_SM)$ and $s:=\dim_K(K\otimes_RN) = \dim_K(K\otimes_SN)$.
The surjective map $\gamma$ is therefore an isomorphism, since its domain 
and target both have the same $K$-dimension, namely $rs$.  From the diagram, we see that $\beta$ must be an isomorphism too, and hence $\alpha$ is injective. 
\end{proof}

\begin{theorem}\label{theorem:evidence}
Let $(R,\fm,k)$ be a Gorenstein local domain of dimension one, and let $E=\End_R(\fm)$, viewed as a ring between $R$ and its integral closure $\overline R$.  Assume that there exist finitely generated modules $M$ and $N$, neither of them free, such that $M\otimes_RN$ is torsion-free.  Then $E$ is local, and the inclusion $R\to E$ induces a bijection on residue fields.
\end{theorem}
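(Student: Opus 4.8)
The plan is to reduce to a setting where Lemma~\ref{lemma:iso} applies with $S=E$, and then read off the structure of the $k$-algebra $E/\fm$ by base change to $k$. First, $\fm$ is not principal: otherwise $R$ is a discrete valuation ring, every non-free finitely generated $R$-module has a nonzero torsion direct summand, and tensoring with the other (nonzero) factor would give $M\otimes_R N$ a nonzero torsion direct summand. So Lemma~\ref{lemma:simple} applies: $E=R+Ry$, $E/R\cong k$, and since $E\fm\subseteq\fm\subseteq E\fm$ we have $\fm E=\fm$; thus $\fm$ is an ideal of $E$ and $B:=E/\fm E=E/\fm$ is a two-dimensional commutative $k$-algebra (it has a filtration with quotients $R/\fm$ and $E/R$, both $\cong k$). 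Next I would reduce to the case that $M$ and $N$ are torsion-free, nonzero, and have no free direct summand: replacing $M$ by $M/(\text{torsion})$ and then peeling off free summands — and similarly for $N$ — preserves torsion-freeness of the tensor product, since torsion submodules die in a torsion-free module and direct summands of torsion-free modules are torsion-free, and it cannot make $M$ or $N$ free.

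The heart of the argument — and the step I expect to be the main obstacle — is the claim that $M$ and $N$ are modules over $E$. Because $R$ is Gorenstein of dimension one, $M$ is maximal Cohen--Macaulay, hence reflexive: $M\cong M^{**}$ with $(-)^{*}=\Hom_R(-,R)$. As $M$ has no free direct summand, no homomorphism $M\to R$ is surjective, so the trace ideal $\sum_{\vf\in M^{*}}\vf(M)$ is contained in $\fm$; hence $M^{*}=\Hom_R(M,R)=\Hom_R(M,\fm)$, and $\Hom_R(M,\fm)$ is naturally a module over $E=\End_R(\fm)$ by post-composition with multiplications. The trace ideal of $M^{*}$ coincides with that of $M$, so $M^{*}$ also has no free summand, and the same reasoning gives $M^{**}=\Hom_R(M^{*},R)=\Hom_R(M^{*},\fm)$, again an $E$-module; by reflexivity this is $M$. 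Likewise $N$ is an $E$-module. The rest should be bookkeeping once one sees that "no free summand" forces the $\End_R(\fm)$-structure through reflexivity.

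Now Lemma~\ref{lemma:iso} with $S=E$ shows that the natural surjection $M\otimes_R N\to M\otimes_E N$ is an isomorphism, since $M\otimes_R N$ is torsion-free. Applying $-\otimes_R k$, the left side is $V\otimes_k W$ and the right side is $V\otimes_B W$ (using that $\fm$ is an ideal of $E$), where $V=M/\fm M$ and $W=N/\fm N$ are nonzero; so $\dim_k(V\otimes_B W)=(\dim_k V)(\dim_k W)$. I would finish by checking that this equality forces $B\cong k[\epsilon]/(\epsilon^2)$. Up to isomorphism the only other two-dimensional $k$-algebras are a quadratic field extension $\ell$ of $k$ and $k\times k$. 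If $B=\ell$, then $V,W$ are nonzero $\ell$-vector spaces and $\dim_k(V\otimes_\ell W)=2\dim_\ell V\cdot\dim_\ell W=\tfrac12(\dim_k V)(\dim_k W)$, strictly smaller — impossible. If $B=k\times k$ with orthogonal idempotents $e_1,e_2$ and associated maximal ideals $\fn_1,\fn_2$ (so $\fm=\fn_1\cap\fn_2$), then $M$ is a nonzero torsion-free, hence faithful, $E$-module, so $M_{\fn_i}\neq 0$ for each $i$; if some $e_iV=0$, i.e.\ $e_iM\subseteq\fm M$, then localizing at the maximal ideal $\fn$ not containing $e_i$ — where $e_i$ is a unit and $\fm E_{\fn}=\fn E_{\fn}$ — gives $M_{\fn}=\fn E_{\fn}M_{\fn}$, whence $M_{\fn}=0$ by Nakayama, a contradiction. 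So $e_1V,e_2V,e_1W,e_2W$ are all nonzero, and $\dim_k(V\otimes_B W)=\dim_k(e_1V)\dim_k(e_1W)+\dim_k(e_2V)\dim_k(e_2W)<(\dim_k V)(\dim_k W)$, again impossible. Hence $B\cong k[\epsilon]/(\epsilon^2)$: then $E/\fm E$ is local with residue field $k$, and since $E$ is module-finite over the local ring $R$ it follows that $E$ is local and $R\to E$ induces a bijection on residue fields.
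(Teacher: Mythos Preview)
Your proof is correct and follows essentially the same strategy as the paper: reduce to nonzero torsion-free modules without free summands, use reflexivity together with the identification $M^{*}=\Hom_R(M,\fm)$ to endow $M$ and $N$ with compatible $E$-module structures, apply Lemma~\ref{lemma:iso} with $S=E$, and then compare $\ov M\otimes_k\ov N$ with $\ov M\otimes_{E/\fm E}\ov N$. The only differences are cosmetic: you organize the endgame via the classification of two-dimensional $k$-algebras (ruling out $k\times k$ and a quadratic extension $\ell$) and verify $e_i\ov M\neq 0$ by localizing at the maximal ideals of $E$, whereas the paper first argues by contradiction that $E$ is local (using the Determinant Trick for the same nonvanishing) and then separately rules out $[\ell:k]>1$ by the identical dimension count.
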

\begin{proof} If $\fm$ is a principal ideal, then $R$ is a discrete valuation ring, and $R=E=\overline R$.  Therefore we assume from now on that $\fm$ is not principal.

We begin with some reductions. We first get rid of free summands, by writing $M=M'\oplus R^m$ and $N = N'\oplus R^n$, where
both $M'$ and $N'$ are non-zero, and neither has a non-zero free direct summand.  Notice that $M'\otimes_RN'$, being a direct summand of $M\otimes_RN$, is torsion-free.  Replacing $M$ by $M'$ and $N$ by $N'$, we may assume that neither $M$ nor $N$ has a non-zero free direct summand. 

Next, we have a reduction that goes back to Auslander's 1961 paper \cite{Au}. Let $\tp X$ denote the torsion 
submodule of a module $X$, and put $\tf X= X/(\tp X)$.  By 
\cite[Lemma 2.2]{CW}, $(\tf M)\otimes_R(\tf M)$ is torsion-free.  Moreover, both $\tf M$ and $\tf N$ are non-zero, since otherwise $M\otimes_RN$ would be a non-zero torsion module.   We claim that $\tf M$ has no non-zero free summand.  For, suppose there is a surjection $\tf M \twoheadrightarrow R$. Composing this with the natural surjection $M \twoheadrightarrow \tf M$, we get a surjection $M \twoheadrightarrow R$, and hence $M\cong R \oplus L$, a contradiction. Similarly, $\tf N$ has no non-zero free summand. Replacing $M$ and $N$ by their reductions modulo torsion, we may assume that both $M$ and $N$ are non-zero torsion-free $R$-modules, and that neither $M$ nor $N$ has a non-zero free direct summand.

As in \cite{Bass}, we note that every homomorphism $M\to R$ has its image in $\fm$, and so $M^* = \Hom_R(M,\fm)$, which has a natural $E$-module structure extending the $R$-module structure.  Therefore $M^{**}$ is also an $E$-module.  Since $R$ is Gorenstein and $M$ is torsion-free (= maximal Cohen-Macaulay), the natural map $M\to M^{**}$ is an isomorphism, and hence $M$ itself has an $E$-module structure compatible with the original $R$-module structure.  By symmetry, $N$ too has a compatible $E$-module structure. Lemma \ref{lemma:iso} shows that the natural surjection $M\otimes_R N \twoheadrightarrow M\otimes_E N$ is an isomorphism and, in particular, $M \otimes_EN$ is torsion-free.

Suppose, by way of contradiction, that $E$ is not local, and put $A = E/\fm E$. 
This is a $2$-dimensional $k$-algebra, and it is not local and hence must be isomorphic to $k\times k$.  Let $e$ be the idempotent of $A$ supported on first coordinate.  Then neither $e$ nor $1-e$ is a unit of $A$.  Let $\ov M 
= M/\fm M$ and $\ov N = N/\fm N$.  We claim that $e\ov M \ne 0$.  For suppose $e\ov M = 0$.  Lift $e$ to an element $\tilde e\in E$.  Then $\tilde e M \subseteq \fm M$.  Moreover, $\tilde e M + (1-\tilde e) M + \fm M = M$, and hence $(1-\tilde e)M= M$ by Nakayama's Lemma. 
The Determinant Trick yields an element $a\in (1-\tilde e)E$ such that 
$(1+a)M=0$.  But $M$ is faithful as an $R$-module and hence as an $E$-module (clear denominators).  
Therefore $1+a=0$, and hence $-1\in (1-\tilde e)E$.  But then $-1\in (1-e)A$, 
contradicting the fact that $1-e$ is not a unit.  This proves the claim and shows that $e\ov M \ne 0$.  By symmetry, $(1-e) \ov N \ne 0$, and hence 
$e\ov M\otimes_k (1-e)\ov N \ne 0$.  However, the isomorphism 
$\alpha: M\otimes_RN\overset \cong \longrightarrow  M\otimes_EN$ induces an isomorphism 
$\ov M\otimes_k\ov N \overset \cong \longrightarrow \ov M\otimes_A\ov N$, carrying the non-zero module
$e\ov M\otimes_k (1-e)\ov N$ onto $e\ov M\otimes_A (1-e)\ov N = 0$, a contradiction.  
This completes the proof that $E$ is local.

Let $\fn$ be the maximal ideal of $E$, and put $\ell = E/\fn$.  Suppose $\dim_k\ell > 1$.  
The inclusion $\fm E \hookrightarrow \fn$ induces a surjection $E/\fm E \twoheadrightarrow E/\fn = \ell$.
Since, by Lemma~\ref{lemma:simple}, $\dim_k(E/\fm E) = 2$, this surjection must be an isomorphism, and
hence $\fn = \fm E = \fm$.   Observe that the isomorphism $\alpha:M\otimes_RN\to M\otimes_EN$ induces
an isomorphism
\begin{equation}
\label{eq:iso}
\ov M\otimes_k \ov N \overset {\cong} {\longrightarrow} \ov M\otimes_{\ell} \ov N \,.
\end{equation}

 Put $u = \dim_\ell \ov M$ and $v=\dim_\ell\ov N$.  Then $\dim_\ell(\ov M\otimes_\ell \ov N) = uv$, and hence
 $\dim_k(\ov M\otimes_\ell \ov N) = 2uv$.  On the other hand, $\dim_k(\ov M\otimes_k\ov N) = (\dim_k\ov M)(\dim_k\ov N) = (2u)(2v) = 4uv$.  The isomorphism in \eqref{eq:iso} forces $4uv = 2uv$, and hence either $u=0$ or $v=0$, contradicting Nakayama's Lemma.  This shows that $\dim_k\ell = 1$, and the proof is complete. 
\end{proof}

One might hope, at least for a Gorenstein ring $(R,\fm,k)$ with finite integral closure $\ov R$, 
that $E$ being local with residue field $k$ would force $\ov R$ to be local with residue field $k$.  Of course,
Theorem~\ref{theorem:evidence} would then answer Conjecture~\ref{conjecture:halfhearted} affirmatively.  The next example dashes this hope.

\begin{example}
\label{example:hopedash}
Let $k$ be a field and $D=k[X]_{(X)\cup (X-1)}$.  Then $D$ is a principal ideal domain with 2 maximal ideals. Let $A = k[T]/(T^2), B = k[X]/(X^2) \times k[X]/(X-1)^2$, and define $i:A \hookrightarrow B$ by $i(a+bt) = (a+bx,a+b(x-1))$ where $a,b \in k$, and decapitalization of the indeterminates indicates passage to cosets. Let $\pi: D \twoheadrightarrow B$ be the composition of the natural projection of $D \twoheadrightarrow D/(X^2(X-1)^2)$ and the isomorphism $D/(X^2(X-1)^2) \overset {\cong}{\longrightarrow} B$ provided by the Chinese Remainder Theorem. Define $R$ to be the pullback of $i$ and $\pi$:
\begin{equation}
\xymatrix{ 
R\ \ \ar@{>->}[r] \ar@{->>}[d] & D \ar@{->>}[d]^{\pi\ \ } \\
A\ \  \ar@{>->}[r]^i  & B } 
\end{equation}
By \cite[Proposition 3.1]{WW}, $(R,\fm,k)$ is a local one-dimensional domain, $\ov R = D$, and $\ov R$ is
 finitely generated as an $R$-module. Furthermore, letting $\ff$ be the conductor, we have 
 $A \cong R/\ff$ and $B \cong D/\ff$. Since the length of $\ov R/\ff$, namely $4$, is twice the length of $R/\ff$, 
 \cite[Corollary 6.5]{Bass} guarantees that $R$ is Gorenstein.  One checks that $E:= \End_R(\fm)$ is local, with residue field $k$, but $\ov R$ is not local.
 \end{example}
 
This example cannot be promoted to a counterexample to Conjecture~\ref{conjecture:halfhearted}.  To see this, first observe that $B$ is generated by two elements as an $A$-module.  It follows that $\ov R = D$ is two-generated as an $R$-module.  Therefore $R$ has multiplicity two \cite[Theorem 2.1]{G}, and hence every ideal of the completion $\widehat R$ is two-generated.  It follows that $\widehat R$ is a hypersurface and therefore, by the main theorem of  \cite{HW}, the tensor product of any two non-free finitely generated $R$-modules has non-zero torsion.

\medskip

Some of the material in this paper is taken from the first-named author's 2018 Ph.D. dissertation at the University of Nebraska.

The authors thank the anonymous referee for several helpful suggestions, which have significantly improved the exposition.

\bibliographystyle{plain}

\begin{thebibliography}{10}


\bibitem{Au}
M. Auslander,
\newblock Modules over unramified regular local rings,
\newblock \emph{Illinois J. Math.} \textbf{5} (1961), 631--647.

\bibitem{Bass}
H. Bass,
\newblock On the ubiquity of Gorenstein rings,
\newblock \emph{ Math. Z.}  \textbf{82} (1963) 8--28.



\bibitem{CW}
O.~Celikbas and R.~Wiegand,
\newblock Vanishing of Tor, and why we care about it,
\newblock \emph{J. Pure Appl. Algebra} \textbf{219} (2015), 429--448.


\bibitem{G}
C. Greither,
\newblock On the two generator problem for the ideals of a one-dimensional ring,
\newblock \emph{J. Pure Appl. Algebra} \textbf{24} (1982), 265--276.



\bibitem{HW}
C.~Huneke and R.~Wiegand,
\newblock Tensor products of modules and the rigidity of Tor,
\newblock \emph{Math. Ann.} \textbf{299} (1994), 449--476; \texttt{Correction:} \emph{Math. Ann.} \textbf{338} (2007), 291--293.



\bibitem{WW}
R.~Wiegand and S.~Wiegand,
\newblock Stable isomorphism of modules over one-dimensional rings,
\newblock \emph{J. Algebra} \textbf{107} (1987), 425--435.

\end{thebibliography}

\end{document}